\newtheorem{theorem}{Theorem}
\newtheorem{lemma}[theorem]{Lemma}
\theoremstyle{remark} \newtheorem*{remark}{Remark}
\theoremstyle{remark} \newtheorem*{example}{Example}
\renewcommand{\a}{\mathbf{a}}
\numberwithin{equation}{section}
\numberwithin{theorem}{section}
\newcommand{\binomial}[2]{\left(#1 \atop #2\right)}
\title{Upper bounds on number fields of given degree and bounded discriminant}
\author{Robert J. Lemke Oliver}
\author{Frank Thorne}
\begin{document}

\begin{abstract}
Let $N_n(X)$ denote the number of degree $n$ number fields with discriminant bounded by $X$.  In this note, we improve the best known upper bounds on $N_n(X)$, finding that $N_n(X) = O(X^{ c (\log n)^2})$ for an explicit constant $c$.
\end{abstract}

\maketitle

\section{Introduction and statement of results}

Let $N_n(X) := \#\{ K/\mathbb{Q} : [K:\mathbb{Q}] = n, |\mathrm{Disc}(K)| \leq X\}$ be the number of degree $n$ extensions of $\mathbb{Q}$ with bounded absolute discriminant $\mathrm{Disc}(K)$.  It follows from the Hermite--Minkowski theorem that $N_n(X)$ is finite, and in fact bounded by $O_n(X^n)$.  This was substantially improved by Schmidt \cite{Schmidt}, who shows that $N_n(X) \ll X^{(n+2)/4}$, by Ellenberg and Venkatesh \cite{EllenbergVenkatesh}, who obtain an exponent that is $O(\exp(c\sqrt{\log n}))$ for some constant $c$, and Couveignes \cite{Couveignes}, who shows that $N_n(X) \ll X^{c (\log n)^3}$ for some unspecified constant $c$.

Our main theorem improves on these results.

\begin{theorem}\label{thm:main-intro}
There is a constant $c>0$ such that $N_n(X) \ll_n X^{c (\log n)^2}$ for every $n \geq 6$.  Explicitly, we may take $c=1.564$, and for every $c^\prime > 1/(4(\log 2)^2) \approx 0.52$, there is some $N>0$ such that $N_n(X) \ll_n X^{c^\prime (\log n)^2}$ for every $n \geq N$.
\end{theorem}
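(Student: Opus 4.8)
The plan is an induction on $n$ that splits $N_n(X)=N_n^{\mathrm{prim}}(X)+N_n^{\mathrm{imp}}(X)$ according to whether the degree-$n$ field $K$ (always with $|\mathrm{Disc}(K)|\le X$) has a proper subfield. The imprimitive part is the easy one: since $|\mathrm{Disc}(F)|^{[K:F]}\le|\mathrm{Disc}(K)|$ for any subfield $F\subseteq K$, one has
\[
N_n^{\mathrm{imp}}(X)\ \le\ \sum_{d\mid n,\ 1<d<n}\ \sum_{\substack{[F:\mathbb Q]=d\\ |\mathrm{Disc}(F)|\le X^{d/n}}}\ \#\{\,K\supseteq F:\ [K:\mathbb Q]=n,\ |\mathrm{Disc}(K)|\le X\,\}.
\]
The inner count is the relative analogue of the problem for the degree-$(n/d)$ extension $K/F$; granting a relative form of the theorem, proved by the same induction, it is $\ll X^{c(\log(n/d))^2}$ times a mild function of the invariants of $F$, and the outer sum is controlled using the inductive bound $N_d(X^{d/n})\ll X^{(d/n)c(\log d)^2}$. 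Since $(\log(n/d))^2+\tfrac dn(\log d)^2<(\log n)^2$ for every $1<d<n$—indeed $x(1+e^{x-s})<2s$ whenever $0<x<s$—the imprimitive part costs strictly less than $X^{c(\log n)^2}$, with room to absorb lower-order losses. Everything therefore reduces to primitive $K$.

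For primitive $K$ the geometry of numbers still applies: projecting $\mathcal O_K$ through the Minkowski embedding modulo $\mathbb Z\cdot1$ gives a lattice of rank $n-1$ and covolume $\asymp_n\sqrt{|\mathrm{Disc}(K)|}$, and Minkowski's theorems produce not just one short generator $\alpha$ but a reduced basis $1=v_0,v_1,\dots,v_{n-1}$ with $\prod_{i\ge1}\|v_i\|_\infty\ll_n X^{1/2}$, where $\|\cdot\|_\infty$ is the largest archimedean absolute value; by Kronecker's theorem all but $O_n(1)$ of the $\|v_i\|_\infty$ exceed a constant depending only on $n$. Recording the minimal polynomial of $\alpha$ alone only recovers Schmidt's exponent $(n+2)/4$, which beats $c(\log n)^2$ only in a bounded range of $n$; the improvement must use the \emph{whole} short part of $\mathcal O_K$. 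The plan is to reconstruct $K$ from a carefully selected piece of the multiplication table in the reduced basis—morally the products $\alpha\cdot v_j$, organized into $\sim\log_2 n$ rounds that successively pin down $\mathcal O_K$—so that reducedness confines this data to boxes whose volumes are governed by the $\|v_i\|_\infty$, while the ring axioms and the bound $|\mathrm{Disc}(K)|\le X$ cut the admissible configurations down to $X^{O((\log n)^2)}$. Optimizing the shape of this reconstruction against the constraint $\prod_i\|v_i\|_\infty\ll X^{1/2}$, the extremal case is $n=2^r$: the $r$ rounds contribute geometrically and their exponents telescope to $\approx\tfrac14 r^2=\tfrac14(\log_2 n)^2=(\log n)^2/(4(\log 2)^2)$.

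The main obstacle is precisely the primitive case: making the reconstruction quantitative—deciding exactly which products to record, bounding the number of possibilities for each, and tracking how the ring axioms and the global discriminant interact to suppress the count—while keeping everything uniform enough that a single $c$ works for all degrees and for the relative statements the induction consumes. A secondary difficulty is that the lattices in play (both $\mathcal O_K$ and the rings of integers of the base fields $F$) can be highly skew, so the passage ``lattice points in a box $\approx$ volume/covolume'' carries error terms controlled by successive minima that must be shown not to dominate after summation; here $|\mathrm{Disc}(F)|\ge1$ and Minkowski's second theorem are the essential inputs. Finally, the explicit $c=1.564$ valid for $n\ge6$, and the limiting constant $1/(4(\log 2)^2)$, emerge from carrying all of this through the recursion with the base cases—small $n$, and the range where Schmidt's bound already suffices—treated by hand.
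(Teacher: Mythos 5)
Your proposal does not prove the theorem; it is a research plan with the central step left open, and the route it aims at is entirely different from what the paper actually does.

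The paper does not split into primitive and imprimitive fields and does not induct on the degree. It proves a uniform statement (Theorem~\ref{thm:intro-summary}): for a parameter $r$ and any $d$ with $\binomial{d+r-1}{r-1} > rn$, one has $N_n(X) \ll X^{dr}$. The mechanism is to pick $r$ small integers $\alpha_1,\dots,\alpha_r \in \mathcal O_K$ of archimedean size $\ll X^{1/n}$ (Lemma~\ref{lem:small-basis}, applied with Lemma~\ref{lem:hypersurface} to avoid a bad hypersurface and to force $\mathbb{Q}(\alpha_1) = K$), and then to show that $rn$ mixed traces $\mathrm{Tr}(\alpha_1^{i_1}\cdots\alpha_r^{i_r})$ of total degree $d$ generically determine the tuple $(\alpha_1,\dots,\alpha_r)$ up to $O_{n,r,d}(1)$ choices (Lemmas~\ref{lem:tangent-space} and \ref{lem:general-r}, the latter resting on the Alexander--Hirschowitz theorem). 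Since each of these $rn$ traces is an integer of size $\ll X^{d/n}$, one gets $O(X^{rd})$ fields. Theorem~\ref{thm:main-intro} then follows by choosing $d \approx r-1 \approx \log n / (2\log 2)$ and optimizing via Stirling, plus a direct numerical check for $n < e^{12}$. Imprimitivity never enters; choosing $\alpha_1$ with $\mathrm{Disc}^{(1)}(\mathbf{x}_\alpha) \neq 0$ handles all $K$ at once.

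The genuine gap in your argument is the primitive case, which you yourself flag as ``precisely the main obstacle.'' The claim that one can ``reconstruct $K$ from a carefully selected piece of the multiplication table in the reduced basis,'' organized into $\sim \log_2 n$ ``rounds that successively pin down $\mathcal O_K$,'' and that these rounds ``telescope'' to the exponent $(\log n)^2 / (4(\log 2)^2)$, is asserted but never instantiated: there is no specification of which structure constants to record, no bound on the number of possibilities per round, no mechanism by which the ring axioms or the discriminant actually prune the count, and no linkage between the round structure and the constraint $\prod_i \|v_i\|_\infty \ll X^{1/2}$. Without those, the exponent $(\log n)^2$ is numerology, not a theorem. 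The imprimitive reduction you outline is standard and your inequality $x(1 + e^{x-s}) < 2s$ is correct, but it buys nothing: it only reduces to the primitive case, which is where the entire content of the theorem lies and which you leave unproved. If you want to pursue this, you should look at how the paper replaces the multiplication table by a small, explicitly chosen set of mixed power-sum invariants and uses the Jacobian criterion (Lemma~\ref{lem:tangent-space}) together with an interpolation theorem (Alexander--Hirschowitz) to certify that those invariants generically determine the field.
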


The proof of Theorem \ref{thm:main-intro} follows the same general strategy employed by Ellenberg and Venkatesh and by Couveignes; see Section \ref{sec:motivation} for a loose discussion of the differences.  

In fact, Theorem \ref{thm:main-intro} follows straightforwardly from a numerical computation and by combining the Schmidt bound with the following theorem that is somewhat more flexible than Theorem \ref{thm:main-intro}.

\begin{theorem}\label{thm:intro-summary}  Let $n \geq 2$.

1) Let $d$ be the least integer for which $\left({d+2}\atop{2}\right) \geq 2n+1$.  Then
\begin{equation}\label{eq:main2}
N_n(X) \ll_n X^{2d - \frac{d(d-1)(d+4)}{6n}} \ll X^{\frac{8\sqrt{n}}{3}}.
\end{equation}

2) Let $3 \leq r \leq n$ and let $d$ be such that $\binomial{d+r-1}{r-1} > rn$.  Then $N_n(X) \ll_{n,r,d} X^{dr}$.
\end{theorem}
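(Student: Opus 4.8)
The plan is to adapt the geometry-of-numbers paradigm of Schmidt, Ellenberg--Venkatesh, and Couveignes. Given $K$ with $[K:\mathbb{Q}]=n$ and $|\mathrm{Disc}(K)|\le X$, view $\mathcal{O}_K$ as a lattice of covolume $\asymp_n\sqrt{|\mathrm{Disc}(K)|}\le\sqrt X$ inside $K\otimes\mathbb{R}\cong\mathbb{R}^n$ under the Minkowski embedding, and write $\overline{|\alpha|}:=\max_j|\sigma_j(\alpha)|$ for the house of $\alpha\in\mathcal{O}_K$. The first step is a lattice-point count: I would show that every such $K$ can be written $K=\mathbb{Q}(\alpha_1,\dots,\alpha_r)$ with $\alpha_1,\dots,\alpha_r\in\mathcal{O}_K$ and $\overline{|\alpha_i|}\ll_n X^{1/n}$ for each $i$. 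Indeed, the box $\{\,x:\overline{|x|}\le t\,\}$ contains $\asymp_n t^n/\sqrt X$ points of $\mathcal{O}_K$ once $t\gg_n X^{1/(2n)}$, whereas an $r$-tuple failing to generate $K$ must have all entries in $\mathcal{O}_L$ for one of the $O_n(1)$ proper subfields $L\subsetneq K$; since $[L:\mathbb{Q}]$ divides $n$ and hence is $\le n/2$, and since every nonzero algebraic integer has house $\ge 1$ (so $\mathcal{O}_L$ has covolume $\gg 1$ in its span), such bad tuples number $O_n(t^{rn/2})$, which is dominated once $t\asymp_n X^{1/n}$. A modest strengthening, obtained by choosing the $\alpha_i$ carefully among the many short vectors available, lets one also arrange that the monomials $\alpha^{\mathbf{e}}=\alpha_1^{e_1}\cdots\alpha_r^{e_r}$ of degree $\le d$ already span $K$ over $\mathbb{Q}$ (this is possible since $\binom{d+r-1}{r-1}>rn$ forces $\binom{d+r}{r}>n$).

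The heart of the argument is to show that $K$ is pinned down by a bounded amount of integer data built from these monomials. Each $\alpha^{\mathbf{e}}$ with $|\mathbf{e}|\le d$ lies in $\mathcal{O}_K$ and has $\overline{|\alpha^{\mathbf{e}}|}\ll_n X^{|\mathbf{e}|/n}$, so integers such as $\mathrm{Tr}_{K/\mathbb{Q}}(\alpha^{\mathbf{e}}\alpha^{\mathbf{f}})$ and $\mathrm{Tr}_{K/\mathbb{Q}}(\alpha^{\mathbf{e}}\alpha^{\mathbf{f}}\alpha^{\mathbf{g}})$ have size $\ll_n X^{(|\mathbf{e}|+|\mathbf{f}|+|\mathbf{g}|)/n}$; since the trace form is nondegenerate, knowing these traces for $\mathbf{e},\mathbf{f},\mathbf{g}$ running over a spanning family of low-degree monomials recovers the full ring structure of $K$, hence $K$ itself, up to $O_{n,r,d}(1)$ ambiguity. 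The combinatorial hypothesis $\binom{d+r-1}{r-1}>rn$ enters precisely to bound how much of this data is genuinely free: because multiplication by each $\alpha_i$ is injective on $\mathcal{O}_K$, the natural relation map from $\mathbb{Z}^{\binom{d+r-1}{r-1}}$ into $\mathcal{O}_K^{\,r}$ has rank $rn$, so at most $rn$ of the degree-$d$ monomials are ``new,'' and one should be able to show that reconstructing $K$ requires only (one of $O_{n,r,d}(1)$ combinatorial configurations together with) at most $rn$ integers, each of size $\ll_n X^{d/n}$. Counting admissible tuples then gives $N_n(X)\ll_{n,r,d}(X^{d/n})^{rn}=X^{dr}$, which is part (2).

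For part (1) one specializes to $r=2$ and exploits that a monomial of degree $k$ is genuinely smaller, of house $\ll_n X^{k/n}$ rather than $X^{d/n}$; since there are exactly $k+1$ monomials of degree $k$ in two variables and $d$ is chosen minimally with $\binom{d+2}{2}\ge 2n+1$, a careful accounting of which monomials need to be recorded replaces the crude exponent $2d$ by $2d-\tfrac{d(d-1)(d+4)}{6n}$. Substituting $d\sim 2\sqrt n$ shows this exponent is $\tfrac{8\sqrt n}{3}+o(\sqrt n)$, and absorbing lower-order terms into the implied constant yields the clean bound $X^{8\sqrt n/3}$. The step I expect to be hardest is the middle one: quantifying exactly how much integer data is needed to recover $K$ — that is, proving the trace data of low-degree monomials determines $K$ while losing only an $(n,r,d)$-dependent factor, and in particular that the ``new'' data amounts to only $\asymp rn$ integers rather than $\asymp (rn)^2$ — together with simultaneously arranging that the generators $\alpha_i$ are short and that the $n$ associated conjugate points are in general enough position for the degree-$\le d$ monomials to span $K$. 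The subfield bookkeeping and the covolume lower bounds in the geometry-of-numbers step also require care, but are routine by comparison.
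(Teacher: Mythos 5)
Your proposal correctly identifies the outer shell of the argument (find short generators, record mixed traces of low degree, count the integer data), but the step you flag as "hardest"---showing that roughly $rn$ well-chosen mixed traces pin down $K$ up to $O_{n,r,d}(1)$ ambiguity---is precisely where the substance of the paper lives, and your sketch does not actually contain an argument for it. Saying that "the natural relation map has rank $rn$, so at most $rn$ of the degree-$d$ monomials are new" does not establish that specifying some particular $rn$ trace values forces the tuple $(\alpha_1,\dots,\alpha_r)$, and hence the field, to lie in a finite set: generic finiteness of the fiber of the trace map is not a bookkeeping fact, it is a genuine algebro-geometric statement that must be proved. The paper proves it by (i) choosing a set $A$ of $rn$ exponent vectors such that the Jacobian determinant $\det\left(D\mathrm{Tr}_{n,\mathbf{a}}\right)_{\mathbf{a}\in A}$ is not identically zero, which is Lemma \ref{lem:r=2} for $r=2$ (an explicit inductive computation) and Lemma \ref{lem:general-r} for $r\ge 3$, the latter being a nontrivial application of the Alexander--Hirschowitz interpolation theorem \ref{thm:AH}; (ii) invoking Lemma \ref{lem:tangent-space}, which converts the Jacobian nonvanishing into the statement that there is a hypersurface $\{P=0\}$ off of which the fibers of the mixed-trace map have at most $\prod_i (\deg f_i)$ points (via dominance and B\'ezout); and (iii) using the Ellenberg--Venkatesh hypersurface-avoidance lemma \ref{lem:hypersurface} together with the short basis of Lemma \ref{lem:small-basis} to find a short tuple $(\alpha_1,\dots,\alpha_r)$ that simultaneously avoids $\{P=0\}$ and has $\mathrm{Disc}^{(1)}\neq 0$, the latter guaranteeing that $\alpha_1$ alone already generates $K$. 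None of (i), (ii), (iii) appear in your proposal.

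Two further points where your sketch diverges and would run into trouble. First, your proposed method for ensuring $\mathbb{Q}(\alpha_1,\dots,\alpha_r)=K$ is a counting argument over subfields (bad tuples are $O(t^{rn/2})$); the paper instead arranges that a single $\alpha_1$ generates $K$ by forcing $\mathrm{Disc}^{(1)}(\mathbf{x}_\alpha)\neq 0$, which it can do simultaneously with $P(\mathbf{x}_\alpha)\neq 0$ by applying Lemma \ref{lem:hypersurface} to the product $P\cdot \mathrm{Disc}^{(1)}$; your version would need to be made compatible with the hypersurface-avoidance requirement, which a density count alone does not deliver. Second, your description of traces of triple products $\mathrm{Tr}(\alpha^{\mathbf e}\alpha^{\mathbf f}\alpha^{\mathbf g})$ and "recovering the ring structure via nondegeneracy of the trace form" is a different mechanism from the one the paper uses: the paper does not reconstruct the multiplication table of $\mathcal{O}_K$, it directly shows that the polynomial system $\mathrm{Tr}_{n,\mathbf a}(\mathbf x)=\mathrm{Tr}_{n,\mathbf a}(\mathbf x_0)$ for $\mathbf a\in A$ has a finite solution set containing the conjugate tuple, and this finiteness is exactly the content of the Jacobian computation. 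As written, your proposal is a plausible high-level plan with the central lemma assumed; filling the gap essentially forces you back to something like Lemmas \ref{lem:tangent-space}, \ref{lem:r=2}, \ref{lem:general-r}, and the Alexander--Hirschowitz input.
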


Optimizing the choice of $d$, the second case of Theorem \ref{thm:intro-summary} yields an exponent that is $O(r^2 n^{1/(r-1)})$ with an absolute implied constant.  We note that, in applying Theorem \ref{thm:intro-summary} to deduce Theorem \ref{thm:main-intro}, we will choose $r$ to be a suitable multiple of $\log n$.

This improves upon the Schmidt bound $N_n(X) \ll X^{\frac{n + 2}{4}}$ for $n \geq 95$. 
For $n \leq 5$, asymptotic formulas of the form $N_n(X) \sim c_n X$ were proved by Davenport-Heilbronn \cite{DH}, Cohen-Diaz y Diaz-Olivier \cite{CDyDO}, and Bhargava \cite{B4,B5}; although our method still applies in these cases, it yields a substantially weaker result.
In general, for $6 \leq n \leq 94$, the Schmidt bound remains the best known, but improvements are available for fields with restricted Galois structure due to work of Dummit \cite{Dummit}.

\section{Setup for the proof}
\label{sec:motivation}

We begin by recalling the central idea of Schmidt's proof in language that will be of use to us.  Let $K$ be a number field of degree $n$.  The ring of integers $\mathcal{O}_K$ is a lattice inside Minkowski space $K_\infty := K \otimes \mathbb{R} \simeq \mathbb{R}^n$ with covolume $c_K \sqrt{\mathrm{Disc}(K)}$, where $c_K$ is a constant depending only on the signature of $K$.  Similarly, the set $\mathcal{O}_K^0$ of integers in $\mathcal{O}_K$ with trace $0$ forms a lattice inside the trace $0$ subspace of $K_\infty$ with covolume $c_K^\prime \sqrt{\mathrm{Disc}(K)}$ for some $c_K^\prime$, and it follows that there is some $\alpha \in \mathcal{O}_K^0$ all of whose embeddings are at most $O(\mathrm{Disc}(K)^{\frac{1}{2n-2}})$.  We assume for convenience of exposition that $\mathbb{Q}(\alpha) = K$; if not, Schmidt proceeds by induction, counting both the possible extensions $F=\mathbb{Q}(\alpha)/\mathbb{Q}$ and the possible $K/F$.  The minimal polynomial $p_\alpha(x)$ of $\alpha$ is given by
\[
p_\alpha(x) = \prod_{\sigma} (x-\sigma(\alpha)) = x^n + a_2(\alpha) x^{n-2} + \dots + a_n(\alpha),
\]
where the coefficients $a_i$ are integers satisfying $|a_i(\alpha)| \ll \mathrm{Disc}(K)^{ \frac{i}{2n-2}}$ and the product runs over the $n$ embeddings $\sigma\colon K \hookrightarrow \mathbb{C}$.  It follows that, ignoring the issue of subfields, the number of degree $n$ fields with discriminant at most $X$ may be bounded by the number of integral polynomials $f(x) = x^n + a_2x^{n-2} + \dots + a_n$ where each $a_i$ is bounded by $O(X^{\frac{i}{2n-2}})$.  The number of such polynomials is $O(X^{\frac{n+2}{4}})$, and this is Schmidt's bound.

The key idea of Ellenberg and Venkatesh's improvement, on which our work as well as Couveignes's is based, is that there are more invariants of small height attached to tuples of integers inside $\mathcal{O}_K^0$.  For example, suppose $\alpha$ and $\beta$ are elements of $\mathcal{O}_K^0$ whose maximum embedding is bounded by some $Y$.  Then $\alpha$ and $\beta$ (and hence $K$) are determined by their minimal polynomials, which in turn are determined by the traces $\mathrm{Tr}(\alpha^i)$ and $\mathrm{Tr}(\beta^i)$ for $1 \leq i \leq n$.  These traces are integers of size $O(Y^i)$, and it follows that there are most $O(Y^{2\sum_{i=2}^n i}) = O(Y^{n^2+n-2})$ possible pairs $(\alpha,\beta)$.  It is possible to do much better by exploiting mixed traces $\mathrm{Tr}(\alpha^i\beta^j)$, however.  

By regarding the $n$ embeddings $\alpha^{(1)}, \dots, \alpha^{(n)}$ and $\beta^{(1)},\dots,\beta^{(n)}$ of $\alpha$ and $\beta$ as variables, we might hope that once $2n$ of these mixed traces are specified, it is possible to recover the values $\alpha^{(1)}, \dots, \alpha^{(n)}, \beta^{(1)},\dots,\beta^{(n)}$, and hence the pair $(\alpha,\beta)$ and the field $K$.  There are $\binomial{d+2}{2}-1$ mixed traces with $i + j \leq d$, so in particular we might hope that the traces $\mathrm{Tr}(\alpha^i\beta^j)$ with $i+j \leq d \approx 2\sqrt{n}$ suffice to determine $\alpha$ and $\beta$.  Since $\mathrm{Tr}(\alpha^i\beta^j) \ll Y^{i+j}$, this would yield that there are at most $Y^{O(n^{3/2})}$ such pairs.

In Lemma \ref{lem:r=2}, we prove that the set of $2n$ traces $\mathrm{Tr}(\alpha^i\beta^j)$ with smallest possible $i+j$ suffices to determine the pair for a ``generic'' choice of $\alpha$ and $\beta$.  We show that every field has such a choice of $\alpha$ and $\beta$ with small height, and this leads to the first case of Theorem \ref{thm:intro-summary}.  This also provides the first insight into our improvement over the work of Ellenberg and Venkatesh, who only prove in this context that a set of roughly $8n$ traces suffices.

Consider now an $r$-tuple $\alpha_1,\dots,\alpha_r \in \mathcal{O}_K$, with $r \geq 3$.  By a similar heuristic as above, we might hope that once $rn$ different traces $\mathrm{Tr}(\alpha_1^{i_1}\dots \alpha_r^{i_r})$ are specified, the tuple $\alpha_1,\dots,\alpha_r$ is determined.  We show that this is the case in Lemma \ref{lem:general-r} for a set of $rn$ traces with $i_1 + \dots + i_r$ nearly as small as possible.  By contrast, Ellenberg and Venkatesh only show that a set of roughly $2^{2r-1}n$ traces suffices.  Couveignes's work is morally similar but structurally a little different; 
instead of working with mixed traces, he constructs a set of $r$ polynomials, each with about $rn$ coefficients, that determines each number field.  Thus, his approach relies on taking roughly $r^2n$ invariants of a number field.

By expressing the trace as a sum over embeddings, we may regard the mixed traces $\mathrm{Tr}(\alpha_1^{i_1}\dots \alpha_r^{i_r})$ as being governed by polynomial maps from $(\mathbb{A}^n)^r$ to $\mathbb{A}^1$.  The key lemma we use to determine the $\alpha_i$ from these traces is then the following.

\begin{lemma}\label{lem:tangent-space}
For $N \geq 1$, let $f_1, \dots, f_N$ be polynomials from $\mathbb{A}^N$ to $\mathbb{A}^1$.  Suppose that the determinant of the matrix $(\frac{\partial f_i}{\partial x_j})_{1 \leq i,j\leq N}$ is not identically $0$.  Then there is a nonzero polynomial $P\colon \mathbb{A}^N \to \mathbb{A}^1$ such that whenever $P(\mathbf{x}_0) \neq 0$ for some $\mathbf{x}_0 \in \mathbb{A}^N$, the variety $V:=V_{\mathbf{x}_0}$ cut out by the equations $f_1(\mathbf{x}) = f_1(\mathbf{x}_0), \dots, f_N(\mathbf{x}) = f_N(\mathbf{x}_0)$ consists of at most $\prod_i (\deg f_i)$ points.

\end{lemma}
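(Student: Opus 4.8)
The plan is to exploit the Jacobian criterion together with Bézout's theorem. Since the Jacobian determinant $J(\mathbf{x}) = \det\bigl(\tfrac{\partial f_i}{\partial x_j}\bigr)$ is not identically zero, it is a nonzero polynomial, and I would take $P = J$ (or a suitable multiple of it). The point of the hypothesis $J(\mathbf{x}_0) \neq 0$ is that at any point of $V_{\mathbf{x}_0}$ where $J$ does not vanish, the differentials $df_1, \dots, df_N$ are linearly independent, so $V_{\mathbf{x}_0}$ is smooth and $0$-dimensional there; the subtlety is that $J$ might vanish at \emph{other} points of $V_{\mathbf{x}_0}$, so one cannot immediately conclude $V_{\mathbf{x}_0}$ is everywhere $0$-dimensional. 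The first key step is therefore to pass from "$\mathbf{x}_0$ is a smooth isolated point of its fibre" to "the whole fibre is finite."

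To do this I would argue as follows. Consider the morphism $F = (f_1, \dots, f_N)\colon \mathbb{A}^N \to \mathbb{A}^N$. The hypothesis that $J \not\equiv 0$ means $F$ is dominant, hence generically finite; equivalently, the function field extension $k(\mathbb{A}^N)/F^*k(\mathbb{A}^N)$ is finite, of some degree $\delta$. By generic flatness / the theorem on the dimension of fibres, there is a dense open $U \subseteq \mathbb{A}^N$ (in the target) over which $F$ is finite and all fibres have exactly $\delta$ points counted with multiplicity, and in fact at most $\delta$ points set-theoretically. Pulling back, $F^{-1}(U)$ is a dense open of the source, and its complement is cut out by some nonzero polynomial; intersecting this with the non-vanishing locus of $J$ gives the desired $P$. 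Then for any $\mathbf{x}_0$ with $P(\mathbf{x}_0) \neq 0$, the fibre $V_{\mathbf{x}_0} = F^{-1}(F(\mathbf{x}_0))$ lies over $U$ and hence is finite with at most $\delta$ points.

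It remains to bound $\delta$ by $\prod_i \deg f_i$. This is exactly Bézout: the generic fibre of $F$ is the intersection of the hypersurfaces $f_i(\mathbf{x}) = t_i$ for generic $t = (t_1,\dots,t_N)$, each of degree $\deg f_i$ (since subtracting a constant does not change the degree), and a generic such complete intersection in $\mathbb{A}^N$ — being $0$-dimensional — has at most $\prod_i \deg f_i$ points by the affine Bézout bound. (Alternatively, projectivize and use that the intersection number of the projective closures is $\prod \deg f_i$, with the points at infinity only decreasing the affine count.) Combining, $\delta \le \prod_i \deg f_i$, which finishes the proof.

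The main obstacle is the first step: the hypothesis only controls the \emph{single} point $\mathbf{x}_0$, and one must upgrade this to control of the entire fibre through it, uniformly enough to be encoded by the vanishing of one polynomial $P$ independent of $\mathbf{x}_0$. The clean way around this is to phase the argument through the generic behaviour of the morphism $F$ — identifying the good locus $U$ in the target first and only afterwards pulling back — rather than trying to reason fibre by fibre. One should also be slightly careful that $P$ is genuinely a polynomial (the complement of a dense open in $\mathbb{A}^N$ need not be a hypersurface, but it is contained in one, which suffices), and that the degree bound survives the subtraction of constants, which it does.
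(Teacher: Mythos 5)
Your proposal is correct and takes essentially the same route as the paper: both arguments package the $f_i$ into a dominant morphism $F\colon \mathbb{A}^N\to\mathbb{A}^N$, invoke a generic-finiteness result to find a dense open $U$ in the target over which fibres are $0$-dimensional, pull back to obtain the polynomial $P$, and finish with B\'ezout's theorem for the quantitative bound. The only cosmetic difference is that you also throw the Jacobian non-vanishing locus into $P$, which is harmless but not needed once $F(\mathbf{x}_0)\in U$ is guaranteed.
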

\begin{proof}
Let $F\colon \mathbb{A}^N \to \mathbb{A}^N$ be defined by $F(\mathbf{x}) = (f_1(\mathbf{x}),\dots,f_N(\mathbf{x}))$, 
so that $V_{\mathbf{x}_0} = F^{-1}(F(\mathbf{x}_0))$.  $F$ is dominant, since for example the image of a Euclidean neighborhood of any $\mathbf{x}$ for which $\det \big( \frac{\partial f_i}{\partial x_j} \big)(\mathbf{x}) \neq 0$ is a neighborhood of $F(\mathbf{x})$. 
By \cite[Chapter 1, \S 8, Theorem 3]{redbook}, there is a Zariski open set $U \subseteq \textnormal{im}(F)$ such that $F^{-1}(\mathbf{y})$ is of dimension zero for $\mathbf{y} \in U$. Choosing a
polynomial $Q$ vanishing on the complement of $U$, $P = Q \circ F$ is our desired polynomial.

Therefore, when  $P(\mathbf{x}_0) \neq 0$, the variety $V:=V_{\mathbf{x}_0}$ has dimension $0$ and consists of a finite number of points.  The quantitative bound follows by using B\'ezout's theorem \cite[Theorem I.7.7]{har} to iteratively bound the number of affine components of the projectivization of $V\big(f_1 - f_1(\mathbf{x}_0), \dots,  f_m - f_m(\mathbf{x}_0)\big)$ for each $m \leq N$.
\end{proof}



\section{The dimension of mixed trace varieties}

In this section, we show that the varieties associated to fixed values of the ``mixed traces'' introduced in the previous section typically have dimension $0$.  We do so over $\mathbb{C}$.  Thus, let $n \geq 2$ be an integer, corresponding to the degree of the extensions we wish to count, and let $r \geq 2$ be an integer corresponding to the number of elements of which we wish to take the mixed trace.  To an $r$-tuple $\mathbf{a}=(a_1,\dots,a_r) \in \mathbb{Z}_{\geq 0}^r$, we associate the function
\[
\mathrm{Tr}_{n,\mathbf{a}} \colon (\mathbb{A}^n)^r \to \mathbb{A}^1
\] 
given by
\[
\mathrm{Tr}_{n,\mathbf{a}}(\mathbf{x}_1,\dots,\mathbf{x}_r)
	:= \sum_{i=1}^n x_{1,i}^{a_1} \dots x_{r,i}^{a_r}.
\]
We let $|\mathbf{a}| = a_1 + \dots + a_r$ denote the total degree of $\mathrm{Tr}_{n,\mathbf{a}}$.  Motivated by Lemma \ref{lem:tangent-space}, let 
\[
D \mathrm{Tr}_{n,\mathbf{a}}
	:= \Big( \frac{\partial}{\partial x_{k,i}} \mathrm{Tr}_{n,\mathbf{a}} \Big)_{\substack{1 \leq k \leq r \\ 1 \leq i \leq n}}
\]
denote the (row) vector of partial derivatives of $\mathrm{Tr}_{n,\mathbf{a}}$ (i.e. its gradient).  Our goal, then, is to find a set $A$ of $rn$ different vectors $\mathbf{a}$ with small combined total degree for which the determinant of the matrix 
\[
	(D \mathrm{Tr}_{n,\mathbf{a}})_{\mathbf{a} \in A}
\]
is not identically $0$.  We begin by considering the case $r=2$, both to clarify ideas and because we obtain an essentially optimal result in this case.

\begin{lemma}\label{lem:r=2}
Let $n \geq 1$, and let $A_n = (\a_1, \cdots, \a_{2n})$ consist of the first $2n$ elements of the ordered set $\{(1,0),(0,1),(2,0),(1,1),(0,2),\dots\}$, that is, the set of ordered pairs $(i,j)$ ordered first by total degree $i+j$, then by $j$. 

 Then with notation as above, $\mathrm{det} (D\mathrm{Tr}_{n,\mathbf{a}})_{\mathbf{a} \in A_n} \neq 0$.
\end{lemma}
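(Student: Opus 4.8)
The plan is to exhibit a single point $(\mathbf{x}_1, \mathbf{x}_2) \in (\mathbb{A}^n)^2$ at which the $2n \times 2n$ matrix $(D\mathrm{Tr}_{n,\a})_{\a \in A_n}$ is nonsingular; since the determinant is a polynomial in the coordinates, one nonvanishing specialization shows it is not identically zero. The natural choice is to take the $x_{1,i}$ to be $n$ distinct values, say $x_{1,i} = t_i$ with the $t_i$ generic (or equal to $1, 2, \dots, n$), and to take $x_{2,i} = s_i$ similarly generic. The entry of $D\mathrm{Tr}_{n,\a}$ in the ``first block'' (derivative with respect to $x_{1,i}$) for $\a = (a,b)$ is $a\, x_{1,i}^{a-1} x_{2,i}^{b}$, and in the ``second block'' it is $b\, x_{1,i}^{a} x_{2,i}^{b-1}$. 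So the full matrix is, up to reordering, built from evaluations of the bivariate monomials $x^{a-1}y^b$ and $x^a y^{b-1}$ (with the scalar factors $a$, $b$) at the $n$ points $(t_i, s_i)$.

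The key step is to recognize this as (a scalar rescaling of) a \emph{bivariate confluent Vandermonde / multivariate interpolation matrix}: the $2n$ columns are indexed by the $n$ evaluation points $(t_i,s_i)$, each appearing twice — once with a partial derivative in the first slot, once in the second — and the $2n$ rows are indexed by the exponent vectors in $A_n$. Nonvanishing of the determinant is then equivalent to the statement that the linear functionals ``$\partial_x$ at $(t_i,s_i)$'' and ``$\partial_y$ at $(t_i,s_i)$'', for $i = 1, \dots, n$, are linearly independent on the space $\mathcal{P}$ spanned by the monomials $\{x^{a-1}y^b, x^a y^{b-1} : (a,b) \in A_n\}$, equivalently that there is no nonzero polynomial in $\mathcal{P}$ with a critical point at every $(t_i,s_i)$. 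The set $A_n$ consists of the first $2n$ lattice points in the graded-lex-type order, so $\mathcal{P}$ is (essentially) the space of polynomials of degree $\le d$ together with the lowest few monomials of degree $d+1$, where $\binom{d+2}{2} \approx 2n$ — this is exactly the ``triangular'' shape one needs for bivariate Hermite interpolation to be well-posed at generic nodes. The cleanest route is probably an inductive/degenerating argument: send the points $(t_i,s_i)$ to a configuration where the matrix block-triangularizes (e.g., let one point run off to infinity along a coordinate axis, reducing to the $n-1$ case on a smaller monomial set), or alternatively induct on $n$ by peeling off the two highest-degree columns of $A_n$ and the corresponding evaluation point.

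The main obstacle I anticipate is the bookkeeping at the ``boundary'' — when $2n$ is not of the form $\binom{d+2}{2}$, the set $A_n$ includes a proper initial segment of the degree-$(d+1)$ monomials (ordered by the second coordinate $j$), so $\mathcal{P}$ is not quite a full polynomial space and one must check that the partial degeneration still produces a nonsingular reduced system rather than accidentally hitting a coincidence among the extra monomials. Handling this likely requires choosing the degeneration direction with the grading in mind (degenerate so that the surviving monomials are precisely a lower initial segment) and tracking carefully which column pairs drop out. Once the degeneration is set up correctly, the base case $n=1$ is the $2\times 2$ matrix $\left(\begin{smallmatrix} 1 & 0 \\ 0 & 1\end{smallmatrix}\right)$ (from $\a \in \{(1,0),(0,1)\}$), which is obviously nonsingular, so the induction closes. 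An alternative to the degeneration argument, which may be cleaner to write, is to directly choose the nodes on a curve (e.g. $s_i = t_i^2$ or all $x_{2,i}$ equal) engineered so that the matrix becomes an explicit one-variable confluent Vandermonde whose determinant factors as a product of differences $t_i - t_j$ and hence is manifestly nonzero for distinct $t_i$; I would try this concrete specialization first and fall back on the inductive degeneration only if the exponent set $A_n$ resists such a clean collapse.
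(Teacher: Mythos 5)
Your two concrete specializations both fail for $n$ in the range covered by the lemma, and the fallback degeneration argument is not carried out, so as written this is a sketch with a genuine gap rather than a proof. To see the failure: under $s_i = t_i^2$, the row of $\mathbf{D}$ indexed by $(a,b)$ becomes $\bigl(a\,t_i^{\,a+2b-1}\bigr)_i$ in the first block and $\bigl(b\,t_i^{\,a+2b-2}\bigr)_i$ in the second. For $n \geq 5$ the indices $(4,0), (2,1), (0,2)$ all lie in $A_n$ and all have $a+2b=4$, so these three rows lie in the two--dimensional span of $\bigl((t_i^3)_i, \mathbf{0}\bigr)$ and $\bigl(\mathbf{0}, (t_i^2)_i\bigr)$, forcing the determinant to vanish identically on that curve. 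Similarly, taking all $x_{2,i}$ equal to a constant $c$ makes the rows $(1,0), (1,1), (1,2)$ (all in $A_n$ once $n\geq 4$) collapse into a two--dimensional space. Any one--parameter specialization that makes the columns into power functions of a single variable will suffer a version of this collision, because $A_n$ contains many exponent vectors with the same value of any fixed linear functional once $n$ grows. So the ``clean collapse'' you hoped for does not occur, and you cannot simply verify nonvanishing at a concrete specialization chosen in this way.

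The remaining idea in your proposal --- induct on $n$ by degenerating one evaluation point and peeling off the two highest--degree elements of $A_n$ --- is morally the right one, and is in fact essentially the paper's argument in different clothing, but you have not supplied the key step. The paper works directly with the determinant as a polynomial in $x_{1,n}, x_{2,n}$: by the ``ordered by total degree'' property of $A_n$, the only Laplace--expansion term contributing the top bidegree $\mathbf{a}_{2n-1}+\mathbf{a}_{2n}-(1,1)$ in $(x_{1,n},x_{2,n})$ is the one pairing rows $2n-1, 2n$ with columns $n, 2n$; its coefficient is a nonzero $2\times 2$ determinant times the $(n-1)$--minor, which is exactly $\det(D\mathrm{Tr}_{n-1,\a})_{\a\in A_{n-1}}$ and is nonzero by induction. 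Your ``send one point to infinity'' is the analytic shadow of extracting this leading term, but it carries real bookkeeping (you must show the limiting $2n\times 2n$ matrix block--triangularizes correctly, and that the reduced $\binom{2n-2}{}$--block really is the $(n-1)$ instance, which requires knowing precisely which monomials drop from $A_n$ to $A_{n-1}$). What is needed to close the gap is the observation that distinctness of $\a_k+\a_\ell$ forces the pair $\{2n-1,2n\}$ to be isolated; without that, the degeneration argument could be swamped by cancellations among other Laplace terms of the same order.
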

\begin{proof}
Induction on $n$, with $\mathrm{det} (D\mathrm{Tr}_{1,\mathbf{a}})_{\mathbf{a} \in A_1} = 1$.

Every entry in the matrix $\mathbf{D}=(D \mathrm{Tr}_{n,\mathbf{a}})_{\mathbf{a} \in A}$ is a monomial;
the $(k, n)$-entry $\mathbf{D}_{k, n}$ equals $a_{k, 1} x_{1, n}^{a_{k, 1} - 1} x_{2, n}^{a_{k, 2}}$, and
$\mathbf{D}_{k, 2n} = a_{k, 2} x_{1, n}^{a_{k, 1}} x_{2, n}^{a_{k, 2} - 1}.$
We write $\det(\mathbf{D})$ an alternating sum of products of these monomials. For each $k$ and $\ell$,
the contribution of those terms involving either $\mathbf{D}_{k, n}$ and $\mathbf{D}_{\ell, 2n}$, or alternatively 
$\mathbf{D}_{k, 2n}$ and $\mathbf{D}_{\ell, n}$, is
\begin{align}\label{eq:ind_matrix}
& \pm \det \begin{bmatrix} a_{k, 1} x_{1, n}^{a_{k, 1} - 1} x_{2, n}^{a_{k, 2}} & a_{k, 2} x_{1, n}^{a_{k, 1}} x_{2, n}^{a_{k, 2} - 1}  \\ 
a_{\ell, 1} x_{1, n}^{a_{\ell, 1} - 1} x_{2, n}^{a_{\ell, 2}} & a_{\ell, 2} x_{1, n}^{a_{\ell, 1}} x_{2, n}^{a_{\ell, 2} - 1}  \end{bmatrix} \cdot \delta_{k, \ell} \\
= & \label{eq:ind_matrix_2}
\pm \det \begin{bmatrix} a_{k,1} & a_{k, 2} \\ a_{\ell, 1} & a_{\ell, 2} \end{bmatrix} \cdot x_{1, n}^{a_{k, 1} + a_{\ell, 1} - 1} x_{2, n}^{a_{k, 2} + a_{\ell, 2} - 1} 
\cdot \delta_{k, \ell},
\end{align}
where $\delta_{k, \ell}$ is the relevant $(2n - 2) \times (2n - 2)$ matrix minor, which doesn't involve $x_{1,n}$ or $x_{2, n}$. 

By construction, if $\mathbf{a}_{k} + \mathbf{a}_\ell$ = $\mathbf{a}_{2n - 1} + \mathbf{a}_{2n}$ in $\mathbb{Z}^2$, then 
$\{ k, \ell \} = \{ 2n - 1, 2n \}$. Since the exponents of $x_{1, n}$ and $x_{2, n}$ in \eqref{eq:ind_matrix_2} are given by
 $\mathbf{a}_{k} + \mathbf{a}_{\ell} - (1, 1)$, this implies that the contribution \eqref{eq:ind_matrix_2} from
 $(l, \ell) = (2n - 1, 2n)$ is not cancelled by any other contribution. It therefore suffices to prove that this contribution is not zero.
 
But this is immediate: the $2 \times 2$ determinant in \eqref{eq:ind_matrix_2} is nonzero because consecutive elements of $A_n$ are
never scalar multiples of one another, and $\delta_{2n-1, 2n} = \mathrm{det} (D\mathrm{Tr}_{n-1,\mathbf{a}})_{\mathbf{a} \in A_{n - 1}}$.
\end{proof}

\begin{example}
Let $n=3$.  Then $A = \{(1,0),(0,1),(2,0),(1,1),(0,2),(3,0)\}$. The associated matrix is
\[
\mathbf{D} = 
\begin{bmatrix}
\text{\fbox{$1$}} & \text{\fbox{$1$}} & 1 & \text{\fbox{$0$}} & \text{\fbox{$0$}} & 0 \\
\text{\fbox{$0$}} & \text{\fbox{$0$}} & 0 & \text{\fbox{$1$}} & \text{\fbox{$1$}} & 1 \\
\text{\fbox{$2x_{1,1}$}} & \text{\fbox{$2x_{1,2}$}} & 2x_{1,3} & \text{\fbox{$0$}} & \text{\fbox{$0$}} & 0 \\
\text{\fbox{$x_{2,1}$}} & \text{\fbox{$x_{2,2}$}} & x_{2,3} &  \text{\fbox{$x_{1,1}$}} & \text{\fbox{$x_{1,2}$}} & x_{1,3} \\
0 & 0 & \text{\fbox{$0$}} & 2x_{2,1} & 2x_{2,2} & \text{\fbox{$2x_{2,3}$}} \\
3x_{1,1}^2 & 3x_{1,2}^2 & \text{\fbox{$3x_{1,3}^2$}} & 0 & 0 & \text{\fbox{$0$}} \\
\end{bmatrix}.
\]
The boxed entries comprise the $4 \times 4$ and $2 \times 2$ matrices in \eqref{eq:ind_matrix_2} for $(k, \ell) = (2n - 1, 2n)$, whose determinants
are nonzero and multiply to 
a summand of $\det(\mathbf{D})$.
\end{example}

For larger $r$, we apply a theorem due to Alexander and Hirschowitz \cite{AH}, that we state in the following manner to be consistent with our notation.  (See also \cite{BO}.)  This theorem is also an important ingredient in Couveignes's work.

\begin{theorem}[Alexander--Hirschowitz] \label{thm:AH}
Let $V$ denote the complex vector space of homogeneous degree $d$ polynomials in $r$ variables.  Given $n$ general points in $\mathbb{P}^{r-1}$, let $W \subseteq V$ denote the subspace of polynomials whose first order partial derivatives all vanish at each of the $n$ points.  Then $W$ has the ``expected'' codimension in $V$, namely
\[
\mathrm{codim}\,W
	= \min\{ \mathrm{dim}\,V, rn \},
\]
except for the following cases:
\begin{itemize}
\item $d=2$, $2 \leq n \leq r-1$;
\item $d=3$, $r=5$, $n=7$;
\item $d=4$, $(r,n) \in \{ (3,5),(4,9),(5,14) \}.$
\end{itemize}
\end{theorem}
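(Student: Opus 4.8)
The plan is to recognize the statement as the Alexander--Hirschowitz classification of defective Veronese secant varieties and to reprove it along the classical lines: (i) translate, via apolarity and Terracini's lemma, into a question about the Hilbert function of a general union of double points; (ii) run a double induction on the number of variables $r$ and the degree $d$ using the Horace and differential Horace methods; (iii) handle the finitely many base cases by hand, which is where the exceptional list originates. (For the application in this paper only the non-defective conclusion is needed, and for the relevant ranges of $(r,d)$ no exceptional case occurs, so one may alternatively take \cite{AH,BO} as a black box.)

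For (i): write $\mathbb{P}^{r-1}=\mathbb{P}(U)$ with $\dim U=r$, and let $v_d\colon\mathbb{P}(U)\hookrightarrow\mathbb{P}(\mathrm{Sym}^d U)$, $[v]\mapsto[v^d]$, be the degree-$d$ Veronese. Under the apolarity pairing $\mathrm{Sym}^d U\times V\to\mathbb{C}$, where $V$ is the space of degree-$d$ forms on $U$ so that $\dim V=\binom{d+r-1}{r-1}$, a form $f\in V$ has all first partials vanishing at a point $p_i=[v_i]$ exactly when $f$ annihilates the affine tangent space of $v_d(\mathbb{P}(U))$ at $[v_i^d]$. Hence $W$ is the orthogonal complement of the span of these $n$ tangent spaces, so $\mathrm{codim}_V W$ is the dimension of that span, which by Terracini's lemma is the dimension of the affine cone over the secant variety $\sigma_n(v_d(\mathbb{P}^{r-1}))$. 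Its expected value is $\min\{\dim V,\,rn\}$, so the theorem is equivalent to the statement that a general union $Z$ of $n$ double points in $\mathbb{P}^{r-1}$ has $h^0(\mathcal{I}_Z(d))=\max\{0,\dim V-rn\}$ with $h^1(\mathcal{I}_Z(d))=0$ in the complementary range, off the listed exceptions.

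For (ii): fix a hyperplane $H\cong\mathbb{P}^{r-2}$, specialize $s$ of the double points to lie on $H$, and use the Castelnuovo restriction sequence
\[
0 \longrightarrow \mathcal{I}_{\mathrm{Res}_H Z}(d-1) \longrightarrow \mathcal{I}_Z(d) \longrightarrow \mathcal{I}_{Z\cap H,\,H}(d) \longrightarrow 0,
\]
which bounds $h^0(\mathcal{I}_Z(d))$ above by the sum of an $h^0$ for a degree-$(d-1)$ problem in $\mathbb{P}^{r-1}$ (the residue) and an $h^0$ for a degree-$d$ problem in $\mathbb{P}^{r-2}$ (the trace); by semicontinuity it then suffices to exhibit a specialization for which both subproblems are as expected and the bound is sharp. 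The real content of Alexander--Hirschowitz is that a double point of $\mathbb{P}^{r-1}$ lying on $H$ contributes a double point of $H$ (length $r-1$) to the trace and only a reduced point (length $1$) to the residue, so naive specialization leaves residual problems mixing double and simple points and never closes up numerically. The remedy is the \emph{differential Horace method}: one degenerates each specialized double point to a length-$r$, non-curvilinear limit scheme whose trace on $H$ and residue are again unions of fat points of prescribed, smaller multiplicity, arranged so that both resulting interpolation problems fall under the inductive hypothesis. I expect the main obstacle to be exactly this bookkeeping — choosing $s$ and the vertical/horizontal configuration of the degenerated points so that the length counts, the numbers $\dim V$ and $rn$, and the split across the exact sequence all line up with no slack.

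For (iii): the induction descends to small $d$ and small $r$. The case $d=2$ is linear algebra — $W$ corresponds to symmetric $r\times r$ matrices vanishing on a fixed general $n$-plane, giving $\mathrm{codim}_V W=\tfrac{1}{2}n(2r-n+1)$, which is defective precisely for $2\le n\le r-1$. The cases $d=3$ and $d=4$ in low dimension require a finite (in practice partly computer-assisted) check; the failures are witnessed by concrete unexpected hypersurfaces — the square of the unique conic through $5$ general points of $\mathbb{P}^2$ for $(d,r,n)=(4,3,5)$, and the analogous secant deficiencies of $v_3(\mathbb{P}^4)$, $v_4(\mathbb{P}^3)$, $v_4(\mathbb{P}^4)$ — and one verifies there are no others. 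Since in the application of Theorem~\ref{thm:intro-summary} the parameter $r$ grows with $\log n$ and $d$ is chosen minimally with $\binom{d+r-1}{r-1}>rn$, none of the exceptional triples occur once $n$ is large, and only the generic conclusion of the theorem is invoked.
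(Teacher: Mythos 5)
The paper does not prove this theorem at all: it is stated as a quotation of a known result of Alexander and Hirschowitz, with the attribution made explicit in the theorem header and by the citation to \cite{AH} (and the survey \cite{BO}), and the surrounding prose says only that the authors ``state it in the following manner to be consistent with our notation.'' Your closing parenthetical --- that one may treat \cite{AH,BO} as a black box, since only the non-defective conclusion is needed and no exceptional triple arises for the parameters used in Theorem~\ref{thm:intro-summary} --- is precisely the paper's own stance, so on the question actually asked (how the paper handles this statement) you have it right.

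Your sketch of an actual proof is supplementary rather than something to be weighed against the paper, but for what it's worth the outline is an accurate summary of the standard route: apolarity plus Terracini's lemma translates $\mathrm{codim}_V W$ into the dimension of the span of tangent spaces to the Veronese, equivalently the Hilbert function of a general union of $n$ double points; the Castelnuovo restriction sequence and the differential Horace degenerations drive a double induction on $(r,d)$; and the exceptional list is produced by finitely many base cases with witnesses such as the square of the conic through five general points for $(d,r,n)=(4,3,5)$, and the corresponding secant defects of $v_3(\mathbb{P}^4)$, $v_4(\mathbb{P}^3)$, $v_4(\mathbb{P}^4)$. Your $d=2$ computation $\mathrm{codim}_V W=\tfrac{1}{2}n(2r-n+1)$ is also correct. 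Be aware, though, that what you give is a plan, not a proof: the differential Horace bookkeeping (choosing the number of specialized points and the limit fat-point schemes so both trace and residue inherit the inductive hypothesis with no numerical slack) is the genuinely hard content of Alexander--Hirschowitz and would require many pages to execute. Since the paper deliberately outsources exactly that work, reproducing it here would be out of scope; your honest flagging of this as the main obstacle is the right call.
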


With Theorem \ref{thm:AH}, we are able to find a good choice of the set $A$ in general.

\begin{lemma}\label{lem:general-r}
Let $n \geq 6$, $3 \leq r \leq n$, and suppose $d$ is such that $\left( {d+r-1}\atop{r-1}\right) > rn$.  Then there is a set $A$ of $rn$ vectors $\mathbf{a} \in \mathbb{Z}_{\geq 0}^{r}$ of total degree $d$ for which the determinant $\mathrm{det}(D\mathrm{Tr}_{n,\mathbf{a}})_{\mathbf{a} \in A} \neq 0$.  
\end{lemma}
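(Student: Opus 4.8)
The plan is to recognize the $rn \times rn$ matrix $(D\mathrm{Tr}_{n,\mathbf{a}})_{\mathbf{a}\in A}$ as a maximal square submatrix of the matrix governing the Alexander--Hirschowitz theorem, and then to read the desired nonvanishing off from the expected-codimension statement there. Write $m_{\mathbf{a}}(y_1,\dots,y_r) := y_1^{a_1}\cdots y_r^{a_r}$ for the monomial attached to $\mathbf{a}$; the monomials with $|\mathbf{a}| = d$ form a basis of the space $V$ of Theorem~\ref{thm:AH}, so that $\dim V = \binom{d+r-1}{r-1}$. The starting observation is the identity
\[
\frac{\partial}{\partial x_{k,i}}\,\mathrm{Tr}_{n,\mathbf{a}}(\mathbf{x}_1,\dots,\mathbf{x}_r)
	= \Big(\frac{\partial m_{\mathbf{a}}}{\partial y_k}\Big)(x_{1,i},\dots,x_{r,i}),
\]
so that, setting $p_i := (x_{1,i},\dots,x_{r,i})$, the row $D\mathrm{Tr}_{n,\mathbf{a}}$ is precisely the concatenation over $i = 1,\dots,n$ of the gradients of $m_{\mathbf{a}}$ at the points $p_1,\dots,p_n$. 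Hence for a fixed tuple $\mathbf{p} = (p_1,\dots,p_n)$ the linear map $\phi_{\mathbf{p}}\colon V \to \mathbb{A}^{rn}$ taking a form $f$ to the list of all its first partial derivatives evaluated at the $p_i$ has kernel equal to the space $W$ of Theorem~\ref{thm:AH}, and in the monomial basis its $rn \times \binom{d+r-1}{r-1}$ matrix has as its columns the vectors $D\mathrm{Tr}_{n,\mathbf{a}}$ (transposed, evaluated at $\mathbf{p}$), one for each $\mathbf{a}$ with $|\mathbf{a}| = d$. In particular $\mathrm{rank}\,\phi_{\mathbf{p}} = \mathrm{codim}\,W$, and any $rn$ of those columns that are linearly independent pick out a set $A$ of $rn$ vectors for which $\det(D\mathrm{Tr}_{n,\mathbf{a}})_{\mathbf{a}\in A}$ does not vanish at $\mathbf{p}$.

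Next I would invoke Theorem~\ref{thm:AH}. The hypothesis $\binom{d+r-1}{r-1} > rn$ is precisely what is needed to avoid every exceptional case: the $d = 2$ exception requires $2 \leq n \leq r-1$, an empty range once $r \leq n$; and for each remaining exceptional triple $(d,r,n) \in \{(3,5,7),(4,3,5),(4,4,9),(4,5,14)\}$ a direct computation gives $\binom{d+r-1}{r-1} \leq rn$ (with equality in all but the case $(4,4,9)$), so the strict inequality we have assumed fails at each. Therefore Theorem~\ref{thm:AH} applies and gives $\mathrm{codim}\,W = \min\{\dim V,\, rn\} = rn$ for $n$ general points of $\mathbb{P}^{r-1}$; equivalently, $\phi_{\mathbf{p}}$ is surjective for all $\mathbf{p}$ in a nonempty Zariski-open subset of $(\mathbb{P}^{r-1})^n$.

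To finish, fix one such general tuple $\mathbf{p}^0 = (p^0_1,\dots,p^0_n)$, with nonzero representatives in $\mathbb{A}^r$. Since $\phi_{\mathbf{p}^0}$ is surjective, $rn$ of the columns of its matrix are linearly independent; these are indexed by a set $A$ of $rn$ vectors $\mathbf{a}$ with $|\mathbf{a}| = d$, which exists because $rn < \binom{d+r-1}{r-1}$, and for this $A$ the polynomial $\det(D\mathrm{Tr}_{n,\mathbf{a}})_{\mathbf{a}\in A}$ is nonzero at the point with $x_{k,i} = p^0_{i,k}$. A polynomial that is nonzero at some point does not vanish identically, so $\det(D\mathrm{Tr}_{n,\mathbf{a}})_{\mathbf{a}\in A} \neq 0$, which is exactly the assertion of the lemma.

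The step I expect to carry the real content is the (finite) verification that the strict inequality $\binom{d+r-1}{r-1} > rn$ dodges the Alexander--Hirschowitz exceptions; this is the whole reason the hypothesis is phrased this way. Everything else is the routine translation between ``the gradient-evaluation map has full rank'' and ``some mixed-trace Jacobian minor is not identically zero,'' together with the standard fact that genericity in Theorem~\ref{thm:AH} is witnessed by a single tuple of points.
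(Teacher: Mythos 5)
Your proposal is correct and follows essentially the same route as the paper: identify the rows $D\mathrm{Tr}_{n,\mathbf{a}}$ with the gradient-evaluation vectors of the degree-$d$ monomials at $n$ general points, invoke Alexander--Hirschowitz to get expected codimension, and extract a full-rank $rn\times rn$ minor whose column set gives $A$. The only difference is that you spell out the exceptional-case verification explicitly, which the paper leaves as "none of the exceptional cases apply."
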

\begin{proof}
As in Theorem \ref{thm:AH}, let $V$ denote the vector space of complex homogeneous polynomials in $r$ variables with degree $d$.
Choose an arbitrary set of $n$ general points in $\mathbb{P}^{r-1}$, and let $W$ denote the subspace with vanishing first order partials at each.
 Under the hypotheses of Lemma \ref{lem:general-r}, $W$ has codimension $rn$, since $\mathrm{dim}\,V = \left( {d+r-1} \atop {r-1}\right)$ and none of the exceptional cases of Theorem \ref{thm:AH} apply.

Let $\mathfrak{M}_d$ denote the set of monomials of degree $d$, which naturally forms a basis for $V$.  For each $m \in \mathfrak{M}_d$, form an $rn$-dimensional column vector $v_m$ by evaluating each of the first order partials of $m$ at the $n$ general points.  Let $M$ denote the $rn \times \left( {d+r-1} \atop {r-1}\right)$ matrix whose columns are the vectors $v_m$ for $m \in \mathfrak{M}_d$.  
The subspace $W$ may be identified with the kernel of $M$, and since $W$ has the expected codimension, it follows that $M$ has full rank, namely $\mathrm{rank}(M) = rn$.  

Choose an $rn\times rn$ minor $M^{\prime}$ of $M$ of full rank. The columns of $M^{\prime}$ are indexed by monomials of total degree $d$ that may be identified with elements of $\mathbb{Z}_{\geq 0}^r$.  Let $A$ consist of those associated elements in $\mathbb{Z}_{\geq 0}^r$.  Then $M^{\prime}$ is the transpose of the matrix $(D \mathrm{Tr}_{n,\mathbf{a}})_{\mathbf{a} \in A}$, 
evaluated at our set of $n$ points, and since $\mathrm{det}(M^{\prime}) \neq 0$ we have
$\mathrm{det} (D \mathrm{Tr}_{n,\mathbf{a}})_{\mathbf{a} \in A} \neq 0$ as well.
\end{proof}

\begin{remark} In Lemma \ref{lem:general-r} (and hence also Theorem
\ref{thm:intro-summary}) we may also allow $\left( {d+r-1}\atop{r-1}\right) = rn$, provided that $(d, r, n)$ is not $(3, 5, 7)$
or $(4, 5, 14)$.
\end{remark}

\section{Bounds on the number of number fields}

Let $K/\mathbb{Q}$ be a number field of degree $n$.  Then $K$ may be embedded into $\mathbb{C}^n$.  Lemmas \ref{lem:r=2} and \ref{lem:general-r} produce, for any $r \geq 2$, 
a set $A \in \mathbb{Z}_{\geq 0}^r$ for which $\mathrm{det}(D\mathrm{Tr}_{n,\mathbf{a}})_{\mathbf{a} \in A} \neq 0$.  By Lemma \ref{lem:tangent-space}, there is a hypersurface outside of which the variety cut out by specifying the mixed traces $\mathrm{Tr}_{n,\mathbf{a}}$ for $\mathbf{a} \in A$ consists of a bounded number of points.  The following lemma, due to Ellenberg and Venkatesh \cite[Lemma 2.4]{EllenbergVenkatesh}, will be used to show that there is an $r$-tuple of integers in $K$, at least one of which cuts out $K$, of small height that avoids this hypersurface.

\begin{lemma}[Ellenberg--Venkatesh]\label{lem:hypersurface}
Let $P\colon \mathbb{A}^N \to \mathbb{A}^1$ be a polynomial of degree $d$.  Then there are integers $a_1,\dots,a_N$ with $|a_i| \leq (d+1)/2$ for which $P(a_1,\dots,a_N) \neq 0$.
\end{lemma}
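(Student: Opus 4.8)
The plan is to deduce the lemma from the following slightly stronger and purely combinatorial fact: a nonzero polynomial $P$ of total degree at most $d$ in $N$ variables cannot vanish identically on the grid $S_d^N$, where $S_d := \{a \in \mathbb{Z} : |a| \le (d+1)/2\}$. Since every point of $S_d^N$ already has all coordinates bounded by $(d+1)/2$, any nonvanishing grid point then proves the lemma. The one preliminary observation I would record is that $S_d$ always contains at least $d+1$ integers: when $d$ is odd, $S_d = \{-(d+1)/2,\dots,(d+1)/2\}$ has $d+2$ elements, and when $d$ is even the constraint $|a| \le (d+1)/2$ is equivalent to $|a|\le d/2$ for integers, so $S_d = \{-d/2,\dots,d/2\}$ has $d+1$ elements. (This is precisely why the bound must be $(d+1)/2$ rather than $d/2$.)

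I would then argue by induction on $N$. For $N=1$, a nonzero univariate polynomial of degree at most $d$ has at most $d$ roots, so among the $\ge d+1$ elements of $S_d$ there is at least one at which $P$ is nonzero. For $N \ge 2$, I would expand $P$ in its last variable as $P(x_1,\dots,x_N) = \sum_{j=0}^{d} Q_j(x_1,\dots,x_{N-1})\,x_N^{j}$; since $P\not\equiv 0$, some coefficient polynomial $Q_{j_0}$ is nonzero, and comparing total degrees gives $\deg Q_{j_0} \le d - j_0 \le d$. By the inductive hypothesis there is a point $(a_1,\dots,a_{N-1}) \in S_d^{N-1}$ with $Q_{j_0}(a_1,\dots,a_{N-1}) \ne 0$. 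Specializing, $P(a_1,\dots,a_{N-1},x_N)$ is a univariate polynomial of degree at most $d$ whose $x_N^{j_0}$-coefficient is nonzero, hence not identically zero, so the $N=1$ case yields $a_N \in S_d$ with $P(a_1,\dots,a_N)\ne 0$.

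No step here is genuinely difficult: this is a routine grid (``polynomial method'') argument. The only points requiring care are the elementary count showing $|S_d|\ge d+1$ and the bookkeeping that each $Q_j$ retains degree at most $d$ so that the induction goes through; the same proof works verbatim if one instead reads ``degree $d$'' as degree at most $d$ in each variable separately. As an alternative to the induction, one could invoke the Schwartz--Zippel lemma directly: a nonzero polynomial of total degree $\le d$ vanishes at no more than $d\,|S_d|^{N-1} < |S_d|^{N}$ points of $S_d^N$ once $|S_d| \ge d+1$, so some grid point avoids the vanishing locus.
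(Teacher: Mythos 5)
Your proof is correct. Note that the paper does not prove this lemma itself; it is quoted verbatim from Lemma 2.4 of Ellenberg and Venkatesh, so there is no in-paper argument to compare against. Your induction-on-$N$ grid argument, with the parity check showing $|S_d| \ge d+1$ (which is exactly why the constant is $(d+1)/2$ rather than $d/2$), is the standard proof of this fact and matches the argument in the cited source.
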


Lastly, we shall make use of the following upper bound on the height of the largest Minkowski minimum of a number field that follows from work of Bhargava, Shankar, Taniguchi, Thorne, Tsimerman, Zhao \cite{BSTTTZ}.

\begin{lemma}\label{lem:small-basis}
Given a number field $K$ of degree $n$, there is an integral basis $\{\beta_1,\dots,\beta_n\}$ of its ring of integers for which $|\beta_i| \ll_n \mathrm{Disc}(K)^{1/n}$ in each archimedean embedding.
\end{lemma}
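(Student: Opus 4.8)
The goal is to bound the largest Minkowski minimum of $\mathcal{O}_K$, i.e.\ to produce an integral basis of $\mathcal{O}_K$ all of whose archimedean conjugates are $O_n(\mathrm{Disc}(K)^{1/n})$. The plan is to invoke the transfer principle between successive minima of a lattice and those of its dual, together with the uniform count of fields with small discriminant provided by \cite{BSTTTZ}. Concretely, I would first recall that $\mathcal{O}_K$ sits in Minkowski space $K_\infty \simeq \mathbb{R}^n$ as a lattice of covolume $c_K\sqrt{\mathrm{Disc}(K)}$ with $c_K$ depending only on the signature, and that the trace pairing identifies the dual lattice with the inverse different $\mathfrak{d}_K^{-1}$, whose covolume is $c_K/\sqrt{\mathrm{Disc}(K)}$ up to the same signature constant. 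By Minkowski's second theorem, the product of the successive minima $\lambda_1(\mathcal{O}_K)\cdots\lambda_n(\mathcal{O}_K)$ is comparable to the covolume, and similarly for the dual; and by the transference inequalities of Banaszczyk one has $\lambda_i(L)\,\lambda_{n+1-i}(L^\vee) \asymp_n 1$ for any lattice $L$. Hence controlling $\lambda_n(\mathcal{O}_K)$ from above is equivalent to controlling $\lambda_1(\mathfrak{d}_K^{-1})$ from below, i.e.\ showing that the inverse different contains no nonzero vector that is too short.

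The key input is that a too-short vector in $\mathfrak{d}_K^{-1}$ forces $K$ to be a \emph{subfield} of a field of small discriminant, and such fields are rare. More precisely, if $\xi \in \mathfrak{d}_K^{-1}$ is nonzero with all conjugates of size $\varepsilon$, then $\mathrm{Tr}_{K/\mathbb{Q}}(\xi^2) \in \mathbb{Z}$ is a nonzero algebraic integer of absolute value $O_n(\varepsilon^2)$, so $\varepsilon \gg_n 1$ unless $\mathrm{Tr}(\xi^2)=0$; iterating with $\mathrm{Tr}(\xi^k)$ and using that $\xi$ generates a subfield shows that a genuinely short vector can only occur when $\mathbb{Q}(\xi)$ is a proper subfield $F \subsetneq K$ of degree $m\mid n$, $m<n$. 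In that case $\xi$ lies in $\mathfrak{d}_F^{-1}$ essentially, and $\lambda_1(\mathfrak{d}_K^{-1})$ being small forces $\mathrm{Disc}(F)$ to be small relative to $\mathrm{Disc}(K)$. Feeding in the bound of \cite{BSTTTZ} on the number of such $K$ with $\mathrm{Disc}(K) \le X$ and a bounded short dual vector — which is what that paper's second-moment/geometry-of-numbers estimates give — one concludes that one may always choose the generating basis with the stated height bound; equivalently, the ``bad'' fields are accounted for by the uniformity in \cite{BSTTTZ} and absorbed into the implied constant.

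The main obstacle, and the step I expect to require the most care, is making precise the reduction from ``$\mathcal{O}_K$ has a large last successive minimum'' to ``$K$ contains a subfield controlling a short dual vector,'' and then quoting \cite{BSTTTZ} in exactly the form needed: their estimates are phrased in terms of counting rings (or fields) with a point in a fixed region of the space of binary $n$-ic forms or of the Minkowski embedding, and one must check the dictionary between a short vector of $\mathfrak{d}_K^{-1}$ and such a point is faithful, including the signature constants $c_K, c_K'$ and the dependence on $n$. Once that dictionary is in place, the argument is: either $\lambda_n(\mathcal{O}_K) \ll_n \mathrm{Disc}(K)^{1/n}$ directly (the generic case, via Banaszczyk transference and Minkowski's theorem, since $\lambda_1(\mathfrak{d}_K^{-1}) \gg_n 1$ forces $\lambda_n(\mathcal{O}_K) \ll_n \mathrm{covol}(\mathcal{O}_K) \ll_n \mathrm{Disc}(K)^{1/2}$ — but this is too weak, so one instead uses that \emph{all} $\lambda_i$ are $\gg_n \mathrm{Disc}(K)^{-1/n}$ except possibly when a short dual vector exists, pinning $\lambda_n \ll_n \mathrm{Disc}(K)^{1/n}$) or $K$ is one of the sparse exceptional fields handled by \cite{BSTTTZ}. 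I would organize the write-up so that the lattice-theoretic transference is stated cleanly first, the subfield reduction second, and the appeal to \cite{BSTTTZ} last, keeping all $n$-dependence explicit but not optimized.
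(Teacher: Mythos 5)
The paper does not prove this lemma; it cites it directly to \cite{BSTTTZ}, and the argument there is deterministic and pointwise, using the multiplicative structure of $\mathcal{O}_K$. Your proposal, by contrast, has two genuine gaps that would prevent it from working. First, your ``trace'' step is wrong: for $\xi \in \mathfrak{d}_K^{-1}$ we have $\mathrm{Tr}(\xi\alpha)\in\mathbb{Z}$ for $\alpha\in\mathcal{O}_K$, but $\xi^2\in\mathfrak{d}_K^{-2}$, not $\mathfrak{d}_K^{-1}$, and $\mathfrak{d}_K^{-1}\supseteq\mathcal{O}_K$ generally strictly, so $\mathrm{Tr}(\xi^2)$ (and a fortiori $\mathrm{Tr}(\xi^k)$ for $k\ge 2$) need not be a rational integer. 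The claimed reduction ``short dual vector $\Rightarrow$ $\xi$ generates a proper subfield'' therefore does not follow from what you write. Second, and more fundamentally, the lemma is a bound that must hold for \emph{every} degree-$n$ field $K$ with an implied constant depending only on $n$; you cannot establish it by quoting a counting/second-moment estimate from \cite{BSTTTZ} and ``absorbing the bad fields into the implied constant.'' If the set of exceptional $K$ were infinite (for a fixed $n$), no such absorption is possible, and you give no argument that it is finite. Your hedged ``generic case'' via Banaszczyk transference is also off: to conclude $\lambda_n(\mathcal{O}_K)\ll_n\mathrm{Disc}(K)^{1/n}$ from Minkowski's second theorem you would need $\prod_{i<n}\lambda_i(\mathcal{O}_K)\gg_n\mathrm{Disc}(K)^{1/2-1/n}$, i.e.\ positive-power lower bounds on the intermediate minima, not the bound $\lambda_i\gg_n\mathrm{Disc}(K)^{-1/n}$ you invoke.

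The actual mechanism in \cite{BSTTTZ} exploits that $\mathcal{O}_K$ is a \emph{ring}: if the Minkowski minima of $\mathcal{O}_K$ were very lopsided, the vectors realizing the small minima would span a $\mathbb{Z}$-submodule closed under multiplication (because products of short vectors are short, hence cannot escape into the ``long'' directions), giving an order in a proper subfield $F\subsetneq K$; the tower formula for discriminants and the relation $\prod_i\lambda_i\asymp_n\mathrm{Disc}(K)^{1/2}$ then force the balancing $\lambda_n\ll_n\mathrm{Disc}(K)^{1/n}$. That is a geometry-of-numbers argument internal to the single field $K$, not a statistical one. Your instinct about subfields points in the right direction, but you would need to replace both the trace lemma and the counting appeal with the ring-closure argument before the proof is sound.
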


\begin{proof}[Proof of Theorem \ref{thm:intro-summary}]
Consider first the case $r \geq 3$ and let $d$ be as in the statement of the theorem.  Then by Lemma \ref{lem:general-r}, there is a set $A$ of $\mathbf{a} \in \mathbb{Z}_{\geq 0}^r$ of size $rn$ and degree $d$
for which $\mathrm{det}(D\mathrm{Tr}_{n,\mathbf{a}})_{\mathbf{a} \in A} \neq 0$.  By Lemma \ref{lem:tangent-space}, there is a polynomial $P\colon (\mathbb{A}^n)^r \to \mathbb{A}^1$ such that whenever $P(\mathbf{x}_0) \neq 0$, the variety
\[
\{ \mathbf{x} \in (\mathbb{A}^n)^r : \mathrm{Tr}_{n,\mathbf{a}}(\mathbf{x}) = \mathrm{Tr}_{n,\mathbf{a}}(\mathbf{x}_0) \text{ for all $\mathbf{a} \in A$}\}
\]
consists of $O_{d,r,n}(1)$ points.  Given $\mathbf{x} \in (\mathbb{A}^n)^r$, we also define $\mathrm{Disc}^{(1)}(\mathbf{x})$ to denote its discriminant in the first copy of $\mathbb{A}^n$, i.e.
\[
\mathrm{Disc}^{(1)}(\mathbf{x})
	:= \prod_{\substack{i,j \leq n \\ i \neq j}} (x_{1,i}-x_{1,j}).
\]

Now, let $K$ be a number field of degree $n$ and discriminant at most $X$.  By fixing an embedding $K \hookrightarrow \mathbb{C}^n$, we may regard an $r$-tuple of integers $\alpha_1,\dots,\alpha_r \in \mathcal{O}_K$ as giving rise to a point $\mathbf{x}_\alpha \in (\mathbb{A}^n)^r$.  Combining Lemmas \ref{lem:hypersurface} and \ref{lem:small-basis}, we find there are $\alpha_1,\dots,\alpha_r \in \mathcal{O}_K$ with each $|\alpha_i| \ll_{n,d,r} X^{1/n}$ in each archimedean embedding for which the associated point $\mathbf{x}_\alpha$ satisfies both $P(\mathbf{x}_\alpha) \neq 0$ and $\mathrm{Disc}^{(1)}(\mathbf{x}_\alpha) \neq 0$.  Since $\mathrm{Disc}^{(1)}(\mathbf{x}_\alpha) \neq 0$, $\alpha_1$ cuts out a degree $n$ extension of $\mathbb{Q}$, which must therefore be equal to $K$.  
It follows that any such $K$ is determined, up to $O_{n,r,d}(1)$ choices, by the $rn$ values $\mathrm{Tr}_{n,\mathbf{a}}(\mathbf{x}_\alpha)$ for $\mathbf{a} \in A$.  Each of these quantities is an integer of size $O(X^{d/n})$, whence there are $O(X^{rd})$ choices in total, and hence $O(X^{rd})$ number fields $K$. 

The case $r=2$ is similar, except appealing to Lemma \ref{lem:r=2} for the construction of the set $A$, and noting that $\mathrm{Tr}_{n,\mathbf{a}}(\mathbf{x}_\alpha)$ is an integer of size $O(X^{|\mathbf{a}|/n})$.  For the set $A$ produced by Lemma \ref{lem:r=2}, we find
\[
\sum_{\mathbf{a} \in A} |\mathbf{a}|
	= 2nd - \frac{d(d-1)(d+4)}{6},
\]
where $d$ is the least integer for which $\left({d+2} \atop {2}\right) \geq 2n+1$.  This yields the remaining case of the theorem;
for the second inequality in \eqref{eq:main2}, take $d = \lfloor 2 \sqrt{n} \rfloor$.
\end{proof}

\begin{proof}[Proof of Theorem \ref{thm:main-intro}]
We describe an asymptotically optimal choice of $r$ and $d$ as $n\to\infty$ in the second part of Theorem \ref{thm:intro-summary}.  This will show that the exponent in Theorem \ref{thm:main-intro} may be taken to be $(1/(4(\log 2)^2) + o(1)) (\log n)^2$ as $n\to\infty$.

Thus, let $n$ be large.  We choose $d = \alpha \log n$ and $r-1 = \beta \log n$ for constants $\alpha$ and $\beta$.  So doing, a computation with Stirling's formula reveals
\begin{equation}\label{eq:stirling}
\log \left({d+r-1} \atop {r-1}\right)
	= ((\alpha + \beta) \log(\alpha + \beta) - \alpha \log \alpha - \beta \log \beta) \log n + O(\log\log n).
\end{equation}
On the other hand, $\log (rn) = \log n + O(\log\log n)$, so we find that asymptotically optimal choices of $\alpha$ and $\beta$ will satisfy
\[
(\alpha + \beta) \log(\alpha + \beta) - \alpha \log \alpha - \beta \log \beta
	= 1 + O\left(\frac{\log \log n}{\log n}\right).
\]
This expression is symmetric in $\alpha$ and $\beta$, as is the exponent $dr = \alpha\beta (\log n)^2 + O(\log n)$ produced by Theorem \ref{thm:intro-summary}.  As a Lagrange multipliers computation 
shows, the exponent is minimized by choosing $\alpha = \beta = 1/2\log 2$.  This yields the second part of the theorem.

The first part of the theorem (with $c = 1.564$) follows for $n < e^{12}$ by a numerical computation, with $n = 805$ being the bottleneck. For $n > e^{12}$ we choose
$d = r - 1 = \lceil \log(n) \rceil$, apply the bound $\left(2d \atop d\right) \geq \frac{4^d}{2 \sqrt{d}}$, and verify that $\left(2d \atop d\right) \geq rn$ and
$dr < 1.564(\log n)^2$ with this choice.
\end{proof}



\section{Scope for improvement}

We list here three possible directions in which our results may be improved.

First, for $r \geq 3$, it would be desirable to incorporate mixed traces of total degree \emph{at most} $d$, as opposed to restricting attention to mixed traces of total degree exactly $d$ as is done in Lemma \ref{lem:general-r}.  An optimistic version of this improvement would imply the bound $N_n(X) \ll X^{dr}$, say, whenever $\left({d+r} \atop {r}\right) \geq rn$.  This would substantially improve the efficiency of the method for fixed $r$, as this exponent would be $O(r^2 n^{1/r})$, as opposed to that obtained from Theorem \ref{thm:intro-summary}, which is $O(r^2 n^{1/(r-1)})$.  It would not, however, yield an improved version of Theorem \ref{thm:main-intro}.

Second, it would be worthwhile to incorporate greater input from the geometry-of-numbers.  For $r < n$,
there are linearly independent $r$-tuples of integers $\alpha_1,\dots,\alpha_r \in \mathcal{O}_K$ for which $|\alpha_i| \ll \mathrm{Disc}(K)^{\frac{1}{2(n-r+1)}}$, which
for $r \asymp \log(n)$ is significantly smaller than 
the bound $\mathrm{Disc}(K)^{1/n}$ coming from Lemma \ref{lem:small-basis}.  However, in applying Lemma \ref{lem:hypersurface}, it is a priori necessary to work with the full ring of integers, and not a small rank sublattice, in which case Lemma \ref{lem:small-basis} is essentially optimal.  It would be interesting to exclude the possibility that the hypersurface outside of which a mixed trace variety is a complete intersection contains these small rank sublattices for varying $K$.  This would lead to an improvement in the exponents in Theorems \ref{thm:main-intro} and \ref{thm:intro-summary} essentially by a factor of $2$.  The authors hope to return to this question in future work.

Lastly, unlike in the case of Schmidt's work, which considers minimal polynomials, it is not typically the case that algebraic points on a mixed trace variety cut out degree $n$ extensions, even when the variety is a complete intersection.  That is, if integers $m_\mathbf{a}$ are chosen for each $\mathbf{a} \in A$, as is done in the proof of Theorem \ref{thm:intro-summary}, the points $\{ \mathbf{x} \in (\mathbb{A}^n)^r : \mathrm{Tr}_{n,\mathbf{a}}(\mathbf{x}) = m_\mathbf{a} \text{ for all $\mathbf{a} \in A$}\}$ need not define a degree $n$ field extension of the rationals.  For example, let $n=5$ and $r=2$, and consider the set $A$ produced by Lemma \ref{lem:r=2}.  Choosing integers $m_\mathbf{a} \in [-10^{|\mathbf{a}|},10^{|\mathbf{a}|}]$ randomly in \verb^Magma^ and performing a Groebner basis computation, the authors find that a ``typical'' choice of $\{m_\mathbf{a} : \mathbf{a} \in A\}$ gives rise to solutions that define a degree $30$ extension of the rationals that may be realized as a degree $5$ extension of a sextic field $F/\mathbb{Q}$.  Thus, integers corresponding to the mixed traces of elements in quintic fields should be such that the polynomial typically defining this sextic field admits a rational root.  More generally, the authors speculate that mixed traces of $r$-tuples of integers in degree $n$ extensions lie on \emph{thin} subsets of $\mathbb{Z}^{rn}$ in the sense of Serre, and that an understanding of these subsets could yield a substantial improvement to the resulting bounds on number fields.

\section*{Acknowledgments}
Although our work is mostly independent of Couveignes's, we learned of Theorem \ref{thm:AH} from his paper, allowing us to streamline our proof and to improve
the constant in our exponents for $N_n(X)$.

We would like to thank Manjul Bhargava and Akshay Venkatesh for helpful feedback. 

FT was partially supported by grants from the Simons Foundation (Nos. 563234 and 586594), and RJLO was partially supported by NSF grant DMS-1601398.

\bibliographystyle{alpha}
\bibliography{references}
\end{document}